\newtheorem{theorem}{Theorem}
\theoremstyle{definition}
\theoremstyle{remark}
\numberwithin{equation}{section}
\def\als{\alpha_s}
\def\fis{\phi_s}
\def\tal{M\sp\alpha}
\def\taf{N\sp\phi}
\def\cstaf{C^*(N\sp\phi)}
\def\lims{\lim\sb{s\in S}}
\def\gm{\gamma}
\def\pals{\{\alpha_s\}\sb{s\in S}}
\def\pfis{\{\phi_s\}\sb{s\in S}}
\def\xs{x^*}
\begin{document}

\title[Lifting fixed points]
{Lifting fixed points\\
 of completely positive semigroups}
\author{Bebe Prunaru}

\address{Institute of Mathematics ``Simion Stoilow'' 
of the Romanian
Academy\\
P.O. Box 1-764
 RO-014700 Bucharest 
Romania}
\email{bebe.prunaru@gmail.com}

\begin{abstract}

Let $\{\phi_s\}_{s\in S}$ be a commutative semigroup
of completely positive, contractive, 
and weak*-continuous
 linear maps acting on a von Neumann algebra 
$N$. Assume there exists a semigroup $\{\alpha_s\}_{s\in S}$ 
of  weak*-continuous  $*$-endomorphisms of some larger von Neumann algebra $M\supset N$
and a projection $p\in M$  with   $N=pMp$
such that 
$\alpha_s(1-p)\le 1-p$
for every $s\in S$ and 
$\phi_s(y)=p\alpha_s(y)p$ for all $y\in N$.
If $\inf_{s\in S}\alpha_s(1-p)=0$ then  we show
that the map $E:M\to N$ defined by $E(x)=pxp$ for $x\in M$
induces a complete isometry between the fixed point spaces of 
$\{\alpha_s\}_{s\in S}$ and $\{\phi_s\}_{s\in S}$.

\end{abstract}

\keywords{dilation theory, completely positive semigroups, fixed points}
\subjclass[2000]{Primary 46L55,  Secondary  46L05}

\maketitle

Let  $(S,+,0)$   be  a  commutative semigroup with unit $0.$
Consider the  partial preorder  on $S$
induced by the semigroup structure as follows. 
If $s,t\in S$ then 
$s\le t$ if and only if  there exists $r\in S$ such that $s+r=t$.
If $X$  is a  Hausdorff 
topological space and   $f:S\to X$
is a function,  ,
 then
$\lims f(s)$ denotes its limit along the directed set $(S,\le)$,
 whenever this limit exists.

Let $M$ be a von Neumann algebra.
Let $CP(M)$ denote the semigroup  of  all 
completely positive, contractive  and weak*-continuous 
 linear maps $\beta:M\to M$. 
Let also    $End(M)$ be the semigroup of all weak*-continuous 
*-endomorphisms of $M$.  
A family $\{\beta_s\}\sb{s\in S}\subset CP(M)$ is called a semigroup 
if the map
$s\mapsto\beta_s$
is a unital  homomorphism of semigroups  from $S$
 into $CP(M)$.

Suppose now that  $\pals\subset End(M)$   is a semigroup.
Let $p$ be an  orthogonal  projection in $M$
 such that 
$$\als(1-p)\le 1-p \qquad\forall s\in S.$$
Then one can define, for every $s\in S$,  
a  completely positive  mapping on the von Neumann algebra $N=pMp$ 
as follows:
$$\fis(x)=p\als(x)p \qquad\forall x\in N.$$
It is clear that $\pfis\subset CP(N)$.
A short calculation shows that  
$$\fis(pxp)=p\als(x)p \qquad\forall x\in M,$$
and using this, one can show that 
$\pfis$ is a semigroup. 
According to the terminology used in  Chapter 8 of \cite{Arv2003},
where this construction is given for one-parameter semigroups,
$\pals$ is a dilation of $\pfis$ and   
$p$ is a co-invariant projection for $\pals$. 

We shall prove the following result, 
which shows that, under a suitable minimality condition,
the fixed point spaces of $\pals$ and $\pfis$ are completely isometric.
We point out that the minimality condition is always satisfied by the minimal 
E-dilation of a CP-semigroup as constructed in \cite{Arv2003}.

\begin{theorem} \label{main}

Let $M\subset B(H)$ be a von Neumann algebra 
on some Hilbert space $H$.
Let $(S,+,0)$   be  a  commutative semigroup with unit 
and let $\{\alpha_s\}\sb{s\in S}\subset End(M)$ be a semigroup of 
 weak*-continuous    *-endomorphisms of $M$.
 Let  $p\in M$ be a  projection
such that 
$$ \als(1-p)\le 1-p  \qquad\forall s\in S  $$
and
$$\inf_{t\in S}\alpha_t(1-p)=0.$$

Let $\{\phi_s\}\sb{s\in S}\subset CP(N)$
  be the  compression  of $\{\alpha_s\}\sb{s\in S}$ to $N=pMp$
  defined  by 
$$\fis(x)=p\als(x)p \qquad\forall x\in N.$$
Let 
$$M\sp\alpha=\{x\in M : \alpha_t(x)=x, \forall t\in S\}$$
and
$$N\sp\phi=\{x\in N : \phi_t(x)=x, \forall t\in S\}$$
and let $\cstaf$ be the $C^*$-subalgebra of $N$
generated by $\taf$.
Let 
$E:M\to M$ be defined by
$$E(x)=pxp   \qquad\forall x\in M.$$

Then the following hold true:

\begin{itemize}

\item[(1)] 
For each $y\in C^*(N^\phi)$ 
there   exists the limit (in the strong operator topology)
$$\pi(y)=so-\lims\alpha_s(y)$$
 and  the map
 $y\mapsto\pi(y)$
is a *-homomorphism from $\cstaf$ onto $M\sp\alpha$
 such that 
$(\pi\circ E)(x)=x$ for all  $x\in M\sp\alpha.$

\item[(2)]    
$E$ induces a   complete isometry between  $M\sp\alpha$ 
and $N\sp\phi$.

\item[(3)] 
For  each $y\in C^*(N^\phi)$ 
 there exists the limit
$$\Phi(y)=so-\lims \phi_s(y)$$
 and  the map
 $y\mapsto\Phi(y)$ 
is completely positive, idempotent,   
$Ran(\Phi)=N^\phi$, 
and  
$E\circ\pi=\Phi$ on $\cstaf$.

\end{itemize}

\end{theorem}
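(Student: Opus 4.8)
The plan is to reduce the theorem to one convergence statement: for every $y\in N^\phi$ the net $\{\alpha_s(y)\}_{s\in S}$ converges in the strong operator topology; everything else then follows from normality and the elementary properties of $E$. The hard part will be this convergence.

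First I would record preliminaries. Since $\alpha_r(1-p)\le 1-p$ and each $\alpha_t$ is positive, the nets $e_t:=\alpha_t(1-p)$ and $q_t:=\alpha_t(1)$ are decreasing nets of projections; by hypothesis $e_t\to 0$ strongly, hence $q_t\to q_\infty:=\inf_t q_t$ strongly and $P_t:=\alpha_t(p)=q_t-e_t\to q_\infty$ strongly. For $t\ge s$ one has $\alpha_t(p)\le q_t\le q_s$, so for $z\in N$ (where $\alpha_t(z)=\alpha_t(p)\alpha_t(z)\alpha_t(p)$) this gives $(1-q_s)\alpha_t(z)=0=\alpha_t(z)(1-q_s)$, whence, writing $1-P_s=(1-q_s)+e_s$,
$$(1-P_s)\alpha_t(z)=e_s\alpha_t(z),\qquad \alpha_t(z)(1-P_s)=\alpha_t(z)e_s\qquad(z\in N,\ s\le t).$$
Also, $y\in N^\phi$ means $p\alpha_r(y)p=\phi_r(y)=y$, and applying $\alpha_s$ gives the key identity $\alpha_s(y)=P_s\,\alpha_t(y)\,P_s$ whenever $s\le t$.

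Now the main step. Fix $y\in N^\phi$ and $\xi\in H$, and write $g(s):=\|\alpha_s(y)\xi\|^2$, which is bounded by $\|y\|^2\|\xi\|^2$. From the key identity (with $v:=\alpha_t(y)P_s\xi$), $g(s)=\|P_s v\|^2=\|v\|^2-w_{s,t}$ where $w_{s,t}:=\|(1-P_s)\alpha_t(y)P_s\xi\|^2\ge0$; expanding $\|v\|^2=\langle\alpha_t(y^*y)P_s\xi,P_s\xi\rangle$ by $P_s=1-(1-P_s)$ and using the preliminary identities (with $z=y^*y\in N$) together with $\|e_s\xi\|\to0$, the cross terms tend to $0$ as $s\to\infty$ uniformly in $t\ge s$, so
$$g(s)=g(t)+\delta_{s,t}-w_{s,t}\qquad(s\le t),\qquad \sup_{t\ge s}|\delta_{s,t}|\to 0 \text{ as } s\to\infty .$$
Hence $g(t)\ge g(s)-\varepsilon_s$ for $t\ge s$ with $\varepsilon_s\to0$, and since $g$ is bounded this forces $\lim_s g(s)$ to exist; feeding this back, $w_{s,t}=g(t)-g(s)+\delta_{s,t}\to0$. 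Decomposing $\alpha_s(y)-\alpha_t(y)=-P_s\alpha_t(y)(1-P_s)-(1-P_s)\alpha_t(y)P_s-(1-P_s)\alpha_t(y)(1-P_s)$ and replacing each $(1-P_s)$ next to $\alpha_t(y)$ by $e_s$, applied to $\xi$ this is bounded by $2\|y\|\,\|e_s\xi\|+\sqrt{w_{s,t}}\to0$ for $s\le t$ large; directedness of $S$ makes $\{\alpha_s(y)\xi\}$ Cauchy. So $\pi(y):=so\text{-}\lim_s\alpha_s(y)$ exists, lies in $M$, has $\|\pi(y)\|\le\|y\|$, and is $\alpha$-fixed since $\alpha_u\pi(y)=so\text{-}\lim_s\alpha_{u+s}(y)=\pi(y)$ (normal $*$-homomorphisms are strongly continuous on bounded sets). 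This near-monotonicity argument, combining the identity $\alpha_s(y)=P_s\alpha_t(y)P_s$ with the boundedness of $g$ to force the defect $w_{s,t}$ to vanish, is the one genuinely nontrivial point.

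Finally I would package this. Since $\alpha_s$ is multiplicative and multiplication is jointly strongly continuous on bounded sets, $so\text{-}\lim_s\alpha_s(a)$ exists for every $a$ in the $*$-algebra generated by $N^\phi$, defining there a $*$-homomorphism $\pi$ (it is $*$-preserving because the strong limit of $\alpha_s(y^*)=\alpha_s(y)^*$ must equal its weak limit $\pi(y)^*$, and contractive because $\|\pi(a)\|\le\liminf_s\|\alpha_s(a)\|\le\|a\|$); it extends to a $*$-homomorphism $\pi\colon C^*(N^\phi)\to M^\alpha$, and an $\varepsilon/3$ argument gives $\alpha_s(y)\to\pi(y)$ strongly for all $y\in C^*(N^\phi)$. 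For $x\in M^\alpha$ one has $E(x)=pxp\in N^\phi$ and $\pi(E(x))=so\text{-}\lim_s\alpha_s(p)x\alpha_s(p)=q_\infty xq_\infty=x$ (as $x=q_sxq_s$ for all $s$): this is (1), and in particular $\pi$ maps onto $M^\alpha$. For (2): $E$ and the $*$-homomorphism $\pi$ are completely contractive and $\pi\circ E=\mathrm{id}$ on the $C^*$-algebra $M^\alpha$, so $E|_{M^\alpha}$ is a complete isometry, and it is onto $N^\phi$ because $E(\pi(z))=so\text{-}\lim_s p\alpha_s(z)p=so\text{-}\lim_s\phi_s(z)=z$ for $z\in N^\phi$. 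For (3): for $y\in C^*(N^\phi)\subseteq N$, $\phi_s(y)=p\alpha_s(y)p\to p\pi(y)p$ strongly, so $\Phi(y):=so\text{-}\lim_s\phi_s(y)=E(\pi(y))$ exists, $E\circ\pi=\Phi$, and then $\Phi=E\circ\pi$ is completely positive with $\mathrm{Ran}(\Phi)=E(M^\alpha)=N^\phi$, while $\Phi|_{N^\phi}=\mathrm{id}$ forces $\Phi^2=\Phi$.
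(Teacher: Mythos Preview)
Your proof is correct, but it follows a genuinely different route from the paper's.

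The paper's argument proceeds in the opposite order from yours. It first proves $E(M^\alpha)=N^\phi$ \emph{abstractly}, using an invariant mean $\mu$ on the commutative semigroup $S$: given $y\in N^\phi$, one averages $s\mapsto\alpha_s(y)$ against $\mu$ in the weak* sense to produce $z\in M^\alpha$ with $pzp=y$. Only then does minimality enter, and very cheaply: for $w\in M^\alpha$ one has $\alpha_s(pwp)=\alpha_s(p)\,w\,\alpha_s(p)$, and since $\alpha_s(1-p)\to 0$ strongly this tends to $w$. Because every $y\in N^\phi$ is of the form $pwp$, the strong limit $\pi(y)=\lim_s\alpha_s(y)$ exists immediately; the remaining assertions are then declared straightforward.

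You instead attack the strong convergence of $\alpha_s(y)$ for $y\in N^\phi$ head-on, via the identity $\alpha_s(y)=P_s\alpha_t(y)P_s$ and the near-monotonicity estimate for $g(s)=\|\alpha_s(y)\xi\|^2$, and then recover surjectivity of $E|_{M^\alpha}$ from $E\circ\pi=\mathrm{id}$ on $N^\phi$. Your approach is longer and more computational, but it is more elementary in that it never invokes amenability or invariant means; it would survive in settings where such averaging is unavailable. The paper's approach is considerably shorter and conceptually cleaner—the invariant-mean trick dispatches the lifting in two lines—at the cost of importing an external tool. Both arguments use minimality in essentially the same way (through $\alpha_s(1-p)\to 0$ strongly), but the paper uses it only once, whereas your near-monotonicity estimate leans on $\|e_s\xi\|\to 0$ repeatedly.
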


\begin{proof}

First,we show that $E(M\sp\alpha)=N\sp\phi.$
It is clear that $E(M\sp\alpha)\subset N\sp\phi.$
Let $\mu$ be an invariant  mean on $S$.
This means that $\mu$ is a state on the von Neumann algebra 
$\ell\sp{\infty}(S)$ of all complex-valued bounded functions on $S$
that remains invariant under translations.
It is well known \cite{Dix1950} that any commutative semigroup 
is amenable, in the sense that 
it admits  invariant  means.

Let $y\in N\sp\phi$.  
For each $\gm$ in the predual $M_*$ of $M$, 
let $f\sb\gamma\in\ell\sp{\infty}(S)$ be defined by
$$ f\sb\gamma(s)=(\als(y),\gm)  \qquad s\in S. $$
Then there exists $z\in M$   such that 
$$(z,\gamma)=\mu(f\sb\gamma)   \qquad \forall\gm\in M_*.$$
Since $\als$ are weak * continuous
and $\pals$ is a semigroup,
it follows that $z\in\tal$.
Moreover $pzp=y$ and this shows that  $E(M^\alpha)=N^\phi$.

In order to go further,
we need to use the minimality assumption
on $\pals$. 
Suppose now that $w\in M^\alpha$. 
Since $\inf_{t\in S}\alpha_t(1-p)=0$
we see that  
  $$so-\lims\als(pwp)=w.$$
Since $E(\tal)=\taf$
it follows that the limit 
$$\pi(y)=so-\lims\alpha_s(y)$$
exists for every $y\in\taf$
and that $\pi\circ E=id$ on $\tal$.
In particular $E$ is completely  isometric on $M\sp\alpha$.
All the other assertions are straightforward
  consequences of what we have already proved.

\end{proof}

This result and its proof provide, in particular, 
an alternate and simplified approach to the lifting
theorem for fixed points of completely positive maps
from \cite{Pr2009}.
In the case when $S$ is either a commutative, countable and cancellative 
semigroup or $S=\mathbb{R}_+^d$ for some $d\ge1$, 
and $\pals$ are unit preserving,
part 2 of Theorem \ref{main}
follows directly from Proposition 4.4 together with Theorem 4.5 from \cite{Cou2007}.
In the case when $\pfis$ is the semigroup induced by the unilateral shift 
on the Hardy space $H^2$, the existence of the limit in part 3 
of Theorem \ref{main} is proved in \cite{BarHal1982}.

We close with the following result which shows that part 3
of the previous theorem holds true even without assuming the existence of a dilation.

\begin{theorem}

Let $N\subset B(H)$ be a von Neumann algebra 
on some Hilbert space $H$.
  Let $(S,+,0)$   be  a  commutative semigroup with unit. 
Let $\{\phi_s\}\sb{s\in S}$
  be a semigroup of completely positive,
  contractive and weak*-continuous linear maps on $N$.
Let 
$$N\sp\phi=\{x\in N : \phi_t(x)=x, \forall t\in S\}$$
and let $\cstaf$ be the $C^*$-subalgebra of $N$
generated by $\taf$.
Then for  each $y\in C^*(N^\phi)$ 
 there exists the strong-operator limit
$$\Phi(y)=so-\lims \phi_s(y)$$
 and  the map
 $y\mapsto\Phi(y)$ 
is completely positive, contractive, idempotent,   
and moreover $Ran(\Phi)=N^\phi$.

\end{theorem}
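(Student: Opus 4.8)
The idea is to obtain the strong limit first on a generating family of positive elements, by a monotonicity argument, and then to close it up in norm. Since $\phi_s(y^*)=\phi_s(y)^*$, the space $N^\phi$ is a weak*-closed self-adjoint subspace of $N$, and $C^*(N^\phi)$ is the norm closure of the (in general non-unital) $*$-algebra of all finite sums of products $y_1y_2\cdots y_k$ with $y_i\in N^\phi$. Fix $w\in N^\phi$: the Kadison--Schwarz inequality for $2$-positive contractions gives $\phi_r(w^*w)\ge\phi_r(w)^*\phi_r(w)=w^*w$ for every $r\in S$, and applying the positive map $\phi_s$ and using $\phi_{s+r}=\phi_s\circ\phi_r$ yields $\phi_{s+r}(w^*w)\ge\phi_s(w^*w)$. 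Hence $\{\phi_s(w^*w)\}_{s\in S}$ is a norm-bounded net of positive operators which is monotone increasing along the directed set $(S,\le)$, so it converges in the strong operator topology to some $\Phi(w^*w)$; and since each $\phi_r$ is normal, hence strong-operator continuous on bounded sets, $\phi_r(\Phi(w^*w))=so-\lim_{s}\phi_{r+s}(w^*w)=\Phi(w^*w)$, so that $\Phi(w^*w)\in N^\phi$.

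The polarization identity $4y^*z=\sum_{k=0}^{3}i^k(y+i^kz)^*(y+i^kz)$, together with the fact that $N^\phi$ is a complex-linear subspace, then shows that $\phi_s(y^*z)$ converges strongly to an element $\Phi(y^*z)\in N^\phi$ for all $y,z\in N^\phi$; hence $so-\lim_{s}\phi_s(x)$ exists and lies in $N^\phi$ for every $x$ in the linear span of products of one or two fixed points (for $x\in N^\phi$ the limit is $x$). Let $D\subseteq C^*(N^\phi)$ be the set of all $x$ for which both $\phi_s(x)$ and $\phi_s(x)^*$ converge strongly. Since $\|\phi_s\|\le1$, a routine $\varepsilon/3$ argument in the strong topology on bounded sets shows $D$ is norm closed; it is also self-adjoint, contains $N^\phi$ and all products of one or two fixed points, is invariant under each $\phi_r$, and the associated limit map $\Phi\colon D\to N$ satisfies $\phi_r\circ\Phi=\Phi=\Phi\circ\phi_r$, whence $\Phi(D)\subseteq N^\phi$, $\Phi^2=\Phi$, and $\Phi|_{N^\phi}=id$.

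The crux --- and the step I expect to be the main obstacle --- is to show that $D$ contains all of $C^*(N^\phi)$, i.e.\ that $\phi_s(y_1\cdots y_k)$ converges strongly for products of arbitrary length. The difficulty is that $N^\phi$ is \emph{not} a multiplicative domain for the maps $\phi_s$ (the Schwarz inequality need not become an equality in the limit), so the monotonicity used above already fails for products of three fixed points, and $\phi_s(yx)-y\phi_s(x)$ need not tend to $0$ even for $y\in N^\phi$. When it is available, the clean way around this is to invoke a dilation: if $S$ is such that $\{\phi_s\}$ admits an $E$-dilation $\{\alpha_s\}\subseteq End(M)$ on some $M\supset N=pMp$ with $\alpha_s(1-p)\le1-p$ and $\inf_s\alpha_s(1-p)=0$ --- which holds, for instance, for $S=\mathbb{N}_0$ and $S=\mathbb{R}_+$ by the constructions of Bhat and Arveson --- then part $(3)$ of Theorem~\ref{main} applies at once, since there $\alpha_s$ is multiplicative and convergence on $N^\phi$ propagates automatically to all products. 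For a general commutative $S$, where such a dilation need not exist, I would instead try to show that $D$ is a $C^*$-subalgebra of $N$, which (as $N^\phi\subseteq D$) forces $D\supseteq C^*(N^\phi)$: the plan is to bootstrap the length-two argument one level at a time by passing to $n\times n$ amplifications --- for $w_1,\dots,w_n\in N^\phi$ the matrix $W\in M_n(N)$ with the $w_i$ down its first column is fixed by $\phi_s^{(n)}$, so $\phi_s^{(n)}(WW^*)$ is monotone increasing and bounded --- and to use the monotone remainders $R_s:=\Phi(w^*w)-\phi_s(w^*w)$, which decrease strongly to $0$, to bound the oscillation $\|\phi_t(x)\xi-\phi_s(x)\xi\|$ of a product $x$ of fixed points via the Cauchy--Schwarz inequality for the positive forms $(\xi,\eta)\mapsto\langle R_s\xi,\eta\rangle$, and then to conclude by completeness.

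Once $C^*(N^\phi)\subseteq D$ is in hand, set $\Phi(y)=so-\lim_{s}\phi_s(y)$ for $y\in C^*(N^\phi)$. Being a pointwise strong limit --- equivalently, the $\phi_s$ being uniformly bounded, a pointwise weak* limit --- of completely positive contractions, $\Phi$ is completely positive and contractive. The relation $\phi_r\circ\Phi=\Phi$ gives $Ran(\Phi)\subseteq N^\phi$ and $\Phi^2=\Phi$, while $\Phi|_{N^\phi}=id$ gives $N^\phi\subseteq Ran(\Phi)$; hence $\Phi$ is completely positive, contractive, idempotent, and $Ran(\Phi)=N^\phi$, as claimed.
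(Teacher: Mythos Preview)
Your argument is correct through length-two products, and you have honestly isolated the obstacle: extending convergence to products of three or more fixed points. But the proposal does not actually close this gap. The amplification with a column matrix $W$ still only produces entries $w_iw_j^*$ of length two, the appeal to a dilation is unavailable for general commutative $S$ (which is precisely the point of this theorem), and the bootstrapping via the remainders $R_s$ remains a plan rather than an argument.

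The paper's device for breaking the circularity is the one ingredient you do not mention: an \emph{invariant mean} $\mu$ on $S$. Averaging each bounded orbit $\{\phi_s(y)\}_{s\in S}$ against $\mu$ produces, for every $y\in N$, a fixed point $\rho(y)\in N^\phi$; the restriction $\Phi=\rho|_{C^*(N^\phi)}$ is then a completely positive idempotent onto $N^\phi$, constructed \emph{before} any strong limit is taken. The Choi--Effros module identity $\Phi(\Phi(x)y)=\Phi(\Phi(x)\Phi(y))$ for such projections forces $\ker\Phi$ to be the closed left ideal of $C^*(N^\phi)$ generated by the elements $y=\Phi(x^*x)-x^*x$ with $x\in N^\phi$. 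Your monotone-limit step already gives $\phi_s(y)\searrow 0$ strongly for these generators, and Kadison--Schwarz then yields
\[
\|\phi_s(ay^{1/2})h\|^{2}\le\|a\|^{2}\,\langle\phi_s(y)h,h\rangle\longrightarrow 0
\]
for \emph{every} $a\in C^*(N^\phi)$ at once, so $\phi_s\to 0$ strongly on all of $\ker\Phi$ and hence $\phi_s\to\Phi$ on $C^*(N^\phi)$. This estimate is exactly the Cauchy--Schwarz bound you were reaching for with the $R_s$; what makes it decisive is that the ideal description of $\ker\Phi$---available because $\Phi$ was built a priori from the mean---guarantees that such left multiples already exhaust $\ker\Phi$, so no induction on product length is needed.
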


\begin{proof}

Let $\mu$ be an invariant  mean on $S$.
Let $y\in N$.  
For each $\gm$ in the predual $N_*$ of $N$,
let $f\sb\gamma\in\ell\sp{\infty}(S)$ be defined by
$$ f\sb\gamma(s)=(\fis(y),\gm)  \qquad s\in S. $$
Then there exists $z\in N$   such that 
$$(z,\gamma)=\mu(f\sb\gamma)   \qquad \forall\gm\in N_*.$$
Since $\pfis$ are weak* continuous
and $\pfis$ is a semigroup,
it follows that $z\in\taf$.
Let us denote $z=\rho(y)$.
The mapping
$\rho:N\to N$
is completely positive, contractive, idempotent, and 
$Ran(\rho)=\taf$.
Moreover 
$$\fis\circ\rho=\rho\circ\fis=\rho.$$
Let $\Phi:\cstaf\to\taf$
be the restriction of $\rho$ to $\cstaf$.
A well known result from \cite{CE} shows that 
$$ \Phi(\Phi(x)y)=\Phi(\Phi(x)\Phi(y)) \qquad \forall x,y\in\cstaf. $$
This easily implies that $ker(\Phi)$ is the   
closed left ideal in $\cstaf$ generated by all the operators of the form 
$xy-\Phi(xy)$ 
with $x,y\in\taf.$
Moreover  by polarization we see that 
$ker\Phi$
 is the closed ideal of $\cstaf$  generated by all the operators 
 of the form
$\xs x-\Phi(\xs x)$
with $x\in\taf.$

Let $x\in\taf.$ 
Since $\pfis$ are completely positive, they satisfy the Kadison-Schwarz inequality,therefore
$$\fis(\xs x)-\xs x\ge0$$ for all $s\in S$
hence the net 
$\{\fis(\xs x)\}_{s\in S}$
is monotone increasing.
It follows from the way $\Phi$ is constructed that 
$$\Phi(\xs x)=so-\lims\fis(\xs x).$$
Let $y=\Phi(\xs x)-\xs x$.
It follows that 
$$so-\lims\fis(y)=0.$$
Let $a\in\cstaf$ and let $h\in H.$
Then
$$\|\fis(ay^{1/2})h\|^2\le(\fis(y^{1/2}a^* ay^{1/2})h,h)
\le\|a\|^2 (\fis(y)h,h).$$
This shows that 
$\lims\|\fis(z)h\|=0$ for every $z\in\ker\Phi$
therefore
$$\Phi(w)=so-\lims\fis(w)$$ 
for every $w\in\cstaf$.
This completes the proof.

\end{proof}

 \end{document}